\newtheorem{theorem}{Theorem}
\newtheorem{corollary}[theorem]{Corollary}
\newtheorem{lemma}[theorem]{Lemma}
\newtheorem{proposition}[theorem]{Proposition}
\newtheorem{definition}[theorem]{Definition}
\newcommand{\bthm}{\begin{theorem}}
\newcommand{\ethm}{\end{theorem}}
\newcommand{\bcor}{\begin{corollary}}
\newcommand{\ecor}{\end{corollary}}
\newcommand{\blem}{\begin{lemma}}
\newcommand{\elem}{\end{lemma}}
\newcommand{\bprop}{\begin{proposition}}
\newcommand{\eprop}{\end{proposition}}
\newcommand{\bdefn}{\begin{definition}}
\newcommand{\edefn}{\end{definition}}
\newcommand{\bpf}{\begin{proof}}
\newcommand{\epf}{\end{proof}}
\newcommand{\iali}[1]{\begin{align}#1\end{align}}
\newcommand{\ialid}[1]{\begin{aligned}#1\end{aligned}}
\newcommand{\ieqn}[1]{\begin{equation}#1\end{equation}}
\DeclareMathOperator{\argmin}{arg\,min}
\DeclareMathOperator{\rnk}{rank}
\DeclareMathOperator{\spn}{span}
\DeclareMathOperator{\ran}{ran}
\DeclareMathOperator{\diag}{diag}
\DeclareMathOperator{\sgn}{sgn}
\DeclareMathOperator{\TV}{TV}
\DeclareMathOperator{\dist}{dist}
\newcommand{\bR}{\mathbb{R}}
\newcommand{\bN}{\mathbb{N}}
\newcommand{\cE}{\mathcal{E}}
\newcommand{\cT}{\mathcal{T}}
\newcommand{\cS}{\mathcal{S}}
\newcommand{\cA}{\mathcal{A}}
\newcommand{\cI}{\mathcal{I}}
\newcommand{\cV}{\mathcal{V}}
\newcommand{\cG}{\mathcal{G}}
\newcommand{\cO}{\mathcal{O}}
\renewcommand{\hat}{\widehat}
\renewcommand{\tilde}{\widetilde}
\newcommand{\norm}[1]{\| #1 \|}
\newcommand{\ip}[2]{\left\langle#1,#2\right\rangle}
\newcommand{\etal}{{et~al.}}
\begin{document}

\twocolumn[

\aistatstitle{Optimization of Graph Total Variation
	via Active-Set-based Combinatorial Reconditioning}

\runningtitle{Optimization of Graph Total Variation 
	via Active-Set-based Combinatorial Reconditioning}

\aistatsauthor{ Zhenzhang Ye \And Thomas M\"ollenhoff \And  Tao Wu \And Daniel Cremers }

\aistatsaddress{ TU Munich \\ \href{mailto:zhenzhang.ye@tum.de}{zhenzhang.ye@tum.de} \And TU Munich \\ \href{mailto:thomas.moellenhoff@tum.de}{thomas.moellenhoff@tum.de} \And TU Munich \\ \href{mailto:tao.wu@tum.de}{tao.wu@tum.de} \And TU Munich \\ \href{mailto:cremers@tum.de}{cremers@tum.de}} ]

\begin{abstract}
Structured convex optimization on weighted graphs finds numerous applications in machine learning and computer vision. In this work, we propose a novel adaptive preconditioning strategy for proximal algorithms on this problem class. Our preconditioner is driven by a sharp analysis of the local linear convergence rate depending on the ``active set" at the current iterate. We show that nested-forest decomposition of the inactive edges yields a guaranteed local linear convergence rate. Further, we propose a practical greedy heuristic which realizes such nested decompositions and show in several numerical experiments that our reconditioning strategy, when applied to proximal gradient or primal-dual
hybrid gradient algorithm, achieves competitive performances. Our results suggest that local convergence analysis can serve as a guideline for selecting variable metrics in proximal algorithms.
\end{abstract}
\section{Introduction}
\vspace{-0.1cm}

\emph{Preconditioning}, as a way of transforming a difficult linear system
into one that is easier to solve, enjoys a rich and successful history.
Recently, \emph{proximal algorithms} \cite{CoPe11,PaBo13,ChPo16b} have received a surge of popularity in solving
structured non-smooth convex optimization problems appearing across
many fields in science and engineering. Unlike in the case
of linear systems, putting forward a satisfactory theory
and implementation of preconditioning in the general non-smooth setting remains a largely unsolved challenge \cite{PoCh11,GiBo14a,GiBo14b,LSS14,BrSu15,FoBo15,GiBo15,ChPo16b,BeFaOchs18,MoYeWu18}.

This is mainly due to two obstacles:

\noindent
\textbf{(i)} The non-linear dynamics of proximal algorithms, as well as the geometry of the non-smooth energy are more involved than in the quadratic case.
     A precise characterization of the convergence behavior, which could guide the proper choice of metric (preconditioner), is challenging.

\noindent
\textbf{(ii)} In cases where the proper choice of metric is clear, non-diagonal preconditioners typically make the proximal operators in the algorithm much more expensive to evaluate. While favorably reducing the number of outer iterations, each inner iteration could be even of similar complexity as the original problem~\cite{LSS14}.

In this vein, numerous efforts have been devoted to a better understanding of the dynamics of proximal algorithms (see, e.g.,~\cite{nishihara15,garrigos17}), and exploring scenarios where non-diagonally scaled proximal mappings are still efficient to evaluate \cite{FrGo16,BeFa12,BeFaOchs18}.

In this paper we take a novel perspective, circumventing issue \textbf{(i)} by resorting to the local convergence analysis. This does not yield provable guarantees on the global iteration complexity. Nevertheless, we show empirically that our preconditioners guided by the local analysis yield an improvement long before the local linear convergence regime is entered (see Fig.~\ref{fig:theory}).

To overcome difficulty \textbf{(ii)} we restrict ourselves to
structured convex problems on weighted graphs, where metrics based on tree
decompositions are amenable to efficient proximal evaluation
thanks to recent message-passing algorithms~\cite{KPR16}.
Specifically, given an undirected weighted graph $\cG=(\cV,\cE,\omega)$, whose edges are weighted by a function $\omega: \cE\to\bR_{>0}$,
we consider the structured convex optimization on $\cG$:
\iali{
\min_{u\in\bR^\cV} G(u) + \TV_\cG(u),
\label{eq:pp}
}
where $\TV_\cG$ is the graph total variation
\ieqn{
\TV_\cG(u) = \sum_{e=(i, j) \in \cE} \omega_e \, | u_i - u_j |.
}
The function $G:\bR^\cV\to\bR\cup\{+\infty\}$ is assumed to be proper, lower semi-continuous and convex.

We define the vertex-to-edge map $K:\bR^\cV\to\bR^\cE$ by
\ieqn{
K=\diag(\omega)\nabla, \notag
\label{eq:wgrad}
}
where $\nabla$ is the (transposed) incidence matrix of $\cG$, i.e.,
\ieqn{
(\nabla u)_e = u_{i}-u_{j}, \quad \forall e=(i,j)\in\cE, \notag
\label{eq:wtv}
}
with arbitrarily fixed orientation. With this notation we can succinctly write $\TV_\cG(u) = \norm{Ku}_1$.

Problems of form \eqref{eq:pp} are, for example, relevant in image processing and computer vision~\cite{gilboa2008nonlocal,lou2010image,ChPo11,newcombe2011dtam}, unsupervised and transductive learning~\cite{hein2011beyond,hein2013total,bresson2013multiclass,garcia2014multiclass}, collaborative filtering~\cite{benzi2016song} and clustering~\cite{garcia2014multiclass}.

For separable convex $G(u) = \sum_{i \in \cV} ~ g_i(u_i)$, problem \eqref{eq:pp}
can be efficiently solved (up to machine precision) in polynomial time by
parametric max-flow methods
\cite{chambolle2009total,hochbaum2001efficient}. To handle
non-separable but differentiable $G$, the authors in
\cite{xin2014efficient} propose a (primal) proximal gradient iteration,
reducing \eqref{eq:pp} to a sequence of separable problems which are solved
by parametric max-flow. 
For problems on regular grids, several authors proposed a splitting into \emph{chains},
leading to 1D total variation subproblems which can be solved efficiently \cite{Con13,BaSr14,KPR16}.
An active-set method for submodular minimization (which includes the
graph total variation as a special case) was proposed in~\cite{KB17},
which is different from the active-set strategy pursued here.
Landrieu~\etal~recently proposed a fast method for graph total
variation~\cite{landrieu17,raguet18} by assuming that the solution is piecewise constant
and refining that partition by solving a sequence of max-flow problems.
Closely related to the present approach are projected Newton methods \cite{schmidt2012}, which
have also been applied to the total variation \cite{barbero2011fast}.

In contrast, the main focus of this paper is to advance the understanding of
preconditioning in proximal algorithms.
To solve the problem class \eqref{eq:pp}, we consider two types of algorithms:

\paragraph{(1) (Dual) proximal gradient (PG).} Assume $G^*$ is $C^2$ such that $l_{G^*} I \preceq \nabla^2 G^*(\cdot) \preceq L_{G^*} I$ for some constants $l_{G^*},L_{G^*}>0$.
Based on the (Fenchel) dual formulation of \eqref{eq:pp}, written
\iali{
\min_{p\in\bR^\cE} G^*(-K^\top p) +\delta\{\norm{p}_\infty \leq 1\},
\label{eq:dp}
}
one can apply the proximal (or projected) gradient:
\iali{
  p^{k+1} &= \arg\min_{p\in\bR^\cE} -\ip{K \nabla G^*(-K^\top p^k)}{p} \notag\\
  &\qquad +\delta\{\norm{p}_\infty \leq 1\} + \frac{t}{2} \norm{p - p^k}_{T_k}^2.
  \label{eq:ppg}
}
Here
$T_k \in\bR^{|\cE|\times|\cE|}$ is a symmetric positive definite matrix which induces a scaled norm $\|\cdot\|_{T_k}$ defined by $\|u\|_{T_k}^2={\ip{u}{u}_{T_k}}=u ^\top T_k u$.

\paragraph{(2) Primal-dual hybrid gradient (PDHG).}
Another equivalent formulation of \eqref{eq:pp} is the following convex-concave saddle-point problem:
\iali{
\min_{u\in\bR^\cV} \max_{p\in\bR^\cE} ~ \ip{Ku}{p} + G(u) - \delta\{\norm{p}_\infty \leq 1\} ,
\label{eq:spp}
}
to which one can apply the primal-dual hybrid gradient (PDHG) algorithm:
\iali{
u^{k+1} &= \arg\min_{u \in \bR^\cV}~ G(u) + \langle{p^k},{Ku}\rangle + \frac{s}{2}\norm{u-u^k}^2, \label{eq:ppdhg1}\\
p^{k+1} &= \arg\min_{p\in\bR^\cE} ~ -\ip{K(2u^{k+1}-u^k)}{p} \notag\\
 &\qquad +\delta\{\norm{p}_\infty \leq 1\}+\frac{t}{2}\norm{p-p^k}_{T_k}^2. \label{eq:ppdhg2}
}

\subsection{Related work on preconditioning}

The (vanilla) PG and PDHG (typically with $T_k \equiv I$), as special instances of proximal algorithms, are widely applied in convex optimization -- we refer to \cite{CoPe11,PaBo13,ChPo16b} for up-to-date surveys which contain relevant historical accounts and interconnection of algorithms. Acceleration of these algorithms is of significant research as well as practical interests.
To this end, momentum-based acceleration techniques, which are traced back to the seminal works by Nesterov \cite{Nes83} and Polyak \cite{Pol64}, were recently developed for PG \cite{BeTe09,OCBP14} and PDHG \cite{ChPo16a} and achieved impressive performances \cite{ChPo16b}.

In contrast to momentum methods, preconditioning techniques for proximal algorithms are less developed and understood, as previously discussed in the introduction. To clarify further, in the context of proximal methods there are roughly two separate streams of ideas referred to as preconditioning.

In the first one, the aim is to make the individual update steps in the algorithm easier while retaining a convergent method~\cite{BrSu15,ChPo11}. While making each iteration faster, the effect on the overall complexity is unclear.

The second line of works, aims at improving the theoretical convergence rate and thereby reducing the number of outer iterations, see~\cite{GiBo14a,GiBo14b,GiBo15}. However, these works make very restrictive assumptions on the problem class and do not apply to our setting. A consensus among these works is to minimize the \emph{(finite) condition number} $\kappa(T^{-1/2} K)$, which is defined by
\begin{equation}
  \kappa(\cdot) = \frac{\sigma_{\text{max}}(\cdot)}{\sigma_{\text{min}>0}(\cdot)},
  \label{eq:cond}
\end{equation}
as a reasonable heuristic in practice. This was (approximately) pursued for general problems in \cite{PoCh11,FoBo15,DiBo17}. In particular, forest-structured preconditioners for $K=\diag(\omega)\nabla$ which are provably optimal in terms of $\kappa(T^{-1/2} K)$ were proposed recently in \cite{MoYeWu18}.

\section{Local convergence analysis} 
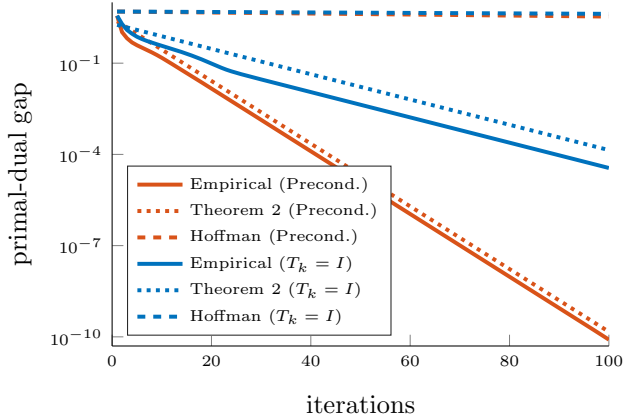
\begin{figure}
  \newcommand{\figurescale}{1}
%
%
\definecolor{mycolor1}{rgb}{0.00000,0.44700,0.74100}%
\definecolor{mycolor2}{rgb}{0.85000,0.32500,0.09800}%
\definecolor{mycolor3}{rgb}{0.92900,0.69400,0.12500}%
\definecolor{mycolor4}{rgb}{0.49400,0.18400,0.55600}%
\definecolor{mycolor5}{rgb}{0.46600,0.67400,0.18800}%
\definecolor{mycolor6}{rgb}{0.30100,0.74500,0.93300}%
\begin{tikzpicture}
\pgfplotsset{every tick label/.append style={font=\tiny}}

\begin{axis}[%
width=2.6in,
height=1.8in,
scale only axis,
xmin=0,
xmax=100,
ymode=log,
ymin=5e-11,
ymax=10,
yminorticks=true,
axis background/.style={fill=white},
legend style={at={(0.03,0.03)}, anchor=south west, legend cell align=left, align=left, draw=white!15!black},
scale=\figurescale,
axis x line*=bottom,
axis y line*=left,
ylabel={primal-dual gap},
xlabel={iterations},
ylabel near ticks,
]
\addplot [color=mycolor2, line width=1.5pt]
  table[row sep=crcr]{%
1	3.65020080667306\\
2	1.04970681656979\\
3	0.658845447937037\\
4	0.481016065810143\\
5	0.388986570456833\\
8	0.226223154635233\\
9	0.185673823596757\\
10	0.150269975997763\\
12	0.0952363287566187\\
14	0.059635024830537\\
18	0.0234705168090737\\
28	0.0021943194733604\\
97	1.66913401197872e-10\\
100	8.09363567936158e-11\\
};
\addlegendentry{\tiny Empirical (Precond.)}

\addplot [color=mycolor2, dotted, line width=1.5pt]
  table[row sep=crcr]{%
1	2.36602540378447\\
100	1.46871020407686e-10\\
};
\addlegendentry{\tiny Theorem~\ref{thm:conv} (Precond.)}

\addplot [color=mycolor2, dashed, line width=1.5pt]
  table[row sep=crcr]{%
1	4.98090581392968\\
100	3.41037806075657\\
};
\addlegendentry{\tiny Hoffman (Precond.)}

\addplot [color=mycolor1, line width=1.5pt]
  table[row sep=crcr]{%
1	3.65020080667318\\
2	1.9393524289374\\
3	1.23913796030391\\
4	0.931380348245684\\
5	0.740069460678068\\
6	0.621798629560345\\
7	0.537671347477873\\
9	0.419258868659763\\
14	0.232511324616552\\
16	0.178729710302665\\
18	0.133657458549144\\
20	0.0971355100397179\\
22	0.0705771589098194\\
23	0.061096727969639\\
24	0.0539143801893401\\
26	0.0433450520093945\\
30	0.0292033888205416\\
52	0.00354633103893025\\
100	3.57570017070538e-05\\
};
\addlegendentry{\tiny Empirical ($T_k = I$)}

\addplot [color=mycolor1, dotted, line width=1.5pt]
  table[row sep=crcr]{%
1	1.81734738579231\\
100	0.00013861987049576\\
};
\addlegendentry{\tiny Theorem~\ref{thm:conv} ($T_k = I$)}

\addplot [color=mycolor1, dashed, line width=1.5pt]
  table[row sep=crcr]{%
1	4.99118713778262\\
100	4.19135953321455\\
};
\addlegendentry{\tiny Hoffman ($T_k = I$)}

\end{axis}

\end{tikzpicture}%
  \caption{Local vs global analysis of the linear convergence of the PG iteration \eqref{eq:ppg}. The local linear rate sharply matches the observed convergence behaviour, while the global rate based on Hoffman's bound is not informative. We guide the construction of our preconditioner based on the local convergence theory.}
  \label{fig:hoffman}
\end{figure}
While the condition number $\kappa(T^{-1/2} K)$ has proven to be reasonable heuristic in practice,
a more quantified connection between the convergence rate and the preconditioner $T$ would be desirable.

For problems of form \eqref{eq:pp}, global linear convergence of PG \eqref{eq:ppg} can be
established using Hoffman's bound~\cite{Hoffman52,klatte1995error,necoara2015linear,karimi2016linear}.  However, the
linear rate obtained from that bound is mainly of theoretical interest,
as it does not really inform us about the practical performance of the
method but rather gives a (weak) upper bound. Secondly, Hoffman's
bound is an inherently combinatorial expression that is very challenging to compute
even for small problem instances.

Instead, we aim to choose the preconditioner to improve the local
convergence behaviour of the method.  It turns out that for a wide
range of \emph{partly smooth} functions the local dynamics of the
PG, PDHG and accelerated variants thereof are well
understood, see~\cite{Liang14,Liang2015,liang2018local}. This
will serve as a basis for our theory.

In Fig.~\ref{fig:hoffman} we show the linear rate predicted by Hoffman's bound to the
local rate on a small $4 \times 3$ grid graph for which Hoffman's bound is still tractable to compute.  As discussed above, the global rate by Hoffman's bound is not
informative. 
The local analysis we present in 
Theorem~\ref{thm:conv} below (which proceeds similar to~\cite{Liang14}) is
sharp, matches the empirical performance and will be the guide of our preconditioners.

Next we establish the local linear convergence of \eqref{eq:ppg}.
Our strategy is to prove that, locally, iteration \eqref{eq:ppg} reduces to gradient descent on a modified unconstrained problem. Therefore, the local linear rate is inherited from the one of gradient descent.
\begin{lemma}
  Let $h$ be $C^2$ with $l_h I \preceq \nabla^2 h(\cdot) \preceq L_h I$ for some constants $l_h,\,L_h>0$. Then
 the gradient descent on $\min_x~h(Ax+b)$ with step size $1/t = 2 / (L_h \sigma_{\text{max}}(A)^2 + l_h \sigma_{\text{min}>0}(A)^2)$ satisfies
  \begin{equation}
    \norm{x^{k+1} - x^*} \leq \frac{\varphi - 1}{\varphi + 1} \norm{x^k - x^*},
  \end{equation}
  with $\varphi =  \kappa(A)^2 \cdot \kappa(h)$, $\kappa(h) := L_h / l_h$.
  \label{lm:cg}
\end{lemma}
\bpf
See the supplementary material.
\epf
The analysis in Theorem \ref{thm:conv} below hinges on finite identification of the \emph{active set} define as
\begin{equation}
  \cA(p) = \left \{ e \in \cE ~:~ |p_e| = 1 \right \}.
  \label{eq:aset}
\end{equation}
The associated projection matrix is defined as
\ieqn{
(P_\cA p)_e =
\begin{cases}
p_e & \text{if }e\in\cA, \\
0 & \text{if } e\notin\cA.
\end{cases}
}
Correspondingly, let $\cI(p):=\cE\backslash\cA(p)$ be the \emph{inactive set} and $P_\cI := I-P_\cA$.

\begin{theorem}
  Suppose that \eqref{eq:ppg} generates a sequence $\{p^k\}$ which converges to a minimizer $p^* \in \bR^\cE$ of \eqref{eq:dp}.
  Under the assumptions that
  \setlist[enumerate]{
    leftmargin=1cm
  }
  \begin{enumerate}[label=(A\arabic*)]
    \item \label{asm:a1} For each $e\in\cE$, $\left( K \nabla G^*(-K^\top p^*) \right)_e = 0 \Rightarrow |p_e^*| < 1$;
    \item \label{asm:a2} For each $k\in\bN$, $\underline{t} I \preceq T_k \preceq \bar{t} I$ with fixed $\underline{t},\,\bar{t} > 0$;
    \item \label{asm:a3} $T_k$ depends on $p^k$ only through $\cA(p^k)$;
  \end{enumerate}
  there exists $\bar{k} \in \bN$ such that for all $k \geq \bar{k}$:
  \setlist[enumerate]{
    leftmargin=0.75cm
  }
  \begin{enumerate}[label=(\roman*)]
    \item Finite identification, i.e.,
      \begin{equation}
        \cA(p^k) = \cA(p^*) \equiv \cA^*,~ T_k \equiv T.
      \end{equation}
    \item Local linear convergence, i.e.,
      \begin{equation}
        \norm{p^{k} - p^*}_T \leq \left( \frac{\varphi - 1}{\varphi + 1} \right)^{k-\bar{k}} \norm{p^{\bar{k}} - p^*}_T,
        \label{eq:local-conv}
      \end{equation}
      with
      \iali{ \label{eq:local-rate}
      \varphi &= \kappa(\Pi_{U(\cA^*)} T^{-1/2} K)^2 \cdot \kappa(G^*),
      }
      and $\Pi_{U(\cA^*)}$ the
      orthogonal projection onto the subspace $U(\cA^*) := \ker(P_{\cA^*} T^{-1/2})$.
    \end{enumerate}
    \label{thm:conv}
\end{theorem}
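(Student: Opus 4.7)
The plan is to follow the blueprint of \cite{Liang14}: first establish finite identification of the active set using the strict-complementarity hypothesis \ref{asm:a1}, and then show that after identification the iteration collapses to a preconditioned gradient descent on the low-dimensional subspace $U(\cA^*)$, so that Lemma \ref{lm:cg} delivers the rate \eqref{eq:local-rate}.

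For step (i), I would start from the first-order optimality condition of \eqref{eq:ppg}, namely $t T_k (p^{k+1} - p^k) - K \nabla G^*(-K^\top p^k) + \mu^{k+1} = 0$ with $\mu^{k+1} \in N_{[-1,1]^\cE}(p^{k+1})$. Since $p^k \to p^*$ forces $p^{k+1} - p^k \to 0$ and $\{T_k\}$ is uniformly bounded by \ref{asm:a2}, passing to the limit recovers the KKT system at $p^*$ with multiplier $\mu^* = K\nabla G^*(-K^\top p^*)$. Assumption \ref{asm:a1} is precisely strict complementarity, so $|(\mu^*)_e| > 0$ and $\sgn((\mu^*)_e) = p^*_e$ for every $e \in \cA^*$. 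Finite identification $\cA(p^{k+1}) = \cA^*$ then follows from a standard partial-smoothness argument applied to the indicator $\delta_{[-1,1]^\cE}$ at $p^*$ relative to its face (as in \cite{Liang14, Liang2015}); in particular the active coordinates of $p^{k+1}$ saturate exactly to $p^*_{\cA^*} \in \{-1,+1\}^{\cA^*}$. Hypothesis \ref{asm:a3} then collapses $T_k$ to a constant $T$ for all large $k$.

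For step (ii), once identification has occurred the active coordinates are locked at $p^k_{\cA^*} = p^*_{\cA^*}$ while the inactive coordinates remain interior, so the normal-cone contribution vanishes on $\cI^*$ and the update reduces to an unconstrained preconditioned gradient step for the smooth map $p \mapsto G^*(-K^\top p)$ on the affine slice $p^* + U(\cA^*)$. Changing variables by $q := T^{1/2} p$ turns the $T$-norm into the Euclidean norm and the constraint $p_{\cA^*} = p^*_{\cA^*}$ into $q - q^* \in \ker(P_{\cA^*} T^{-1/2}) = U(\cA^*)$. In these coordinates the iteration is plain gradient descent on $r \mapsto G^*(-A r + c)$, where $r := q - q^*$, $A := K^\top T^{-1/2}\big|_{U(\cA^*)}$, and $c$ collects constants independent of $r$. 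Applying Lemma \ref{lm:cg} (whose prescribed step size matches the one in \eqref{eq:ppg}) on this affine slice yields $\|r^{k+1}\| \le \frac{\varphi - 1}{\varphi + 1}\|r^k\|$ with $\varphi = \kappa(A)^2\,\kappa(G^*)$. Using the identities $\sigma_i(A) = \sigma_i(\Pi_{U(\cA^*)} T^{-1/2} K)$ and $\|r\| = \|q - q^*\| = \|p - p^*\|_T$ transports the bound back to \eqref{eq:local-conv}--\eqref{eq:local-rate}.

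The main obstacle is the finite-identification step: with a non-diagonal $T_k$ the weighted projection onto $[-1,1]^\cE$ is no longer separable, so one cannot argue componentwise that the correct face is activated. I expect the cleanest resolution is to appeal directly to a partial-smoothness identification theorem, which requires only that the metric remain uniformly positive definite, a condition guaranteed by \ref{asm:a2}. Once identification is secured, the reduction to gradient descent and the subsequent appeal to Lemma \ref{lm:cg} are essentially a change-of-variables bookkeeping exercise.
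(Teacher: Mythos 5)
Your proposal is correct and follows essentially the same route as the paper: finite identification from strict complementarity \ref{asm:a1} together with $\mathrm{dist}(0,\nabla J(p^k)+N(p^k))\to 0$ (the paper invokes Burke--Mor\'e's Corollary~3.6 where you cite the equivalent partial-smoothness identification results of Liang et al.), followed by the change of variables $q=T^{1/2}p$, projection onto $U(\cA^*)$ to kill the normal-cone term, and an appeal to Lemma~\ref{lm:cg} with $A=-K^\top T^{-1/2}\Pi_{U(\cA^*)}$. The only cosmetic difference is that the paper makes the decay of the optimality residual explicit via the bound $\bigl(t\|T_k\|+L_{G^*}\lambda_{\max}(K^\top K)\bigr)\|p^k-p^{k+1}\|$, which is exactly the quantitative ingredient your limit argument relies on.
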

\begin{proof}

  (i) Finite identification of the active set follows by invoking \cite[Corollary~3.6]{burke88}. The strict complementary condition at $p^*$ required by that corollary is \ref{asm:a1}.
  Further, the corollary requires
  \begin{equation}
    \dist(0, \nabla J(p^k) + N(p^k)) \to 0,
    \label{eq:optzero}
  \end{equation}
  where $J = G^* \circ (-K^\top)$ and
  \begin{equation} \label{eq:ncdef}
    \begin{aligned}
      N(\bar p) = \Bigl \{ p \in \bR^\cE : p_e = 0 &~\text{ if } e \notin \cA(\bar p), \\
        \sgn(\bar p_e) \cdot p_e \geq 0 &~\text{ if } e \in \cA(\bar p) \Bigr \},
    \end{aligned}
  \end{equation}
  denotes the normal cone at $\bar p$. From the optimality conditions of \eqref{eq:ppg} it follows
    \iali{
      t T_k (p^{k} - p^{k+1}) - &\left( \nabla J(p^k) - \nabla J(p^{k+1}) \right) \notag\\
      &\in \nabla J(p^{k+1}) + N(p^{k+1}).
    }
  Then we have
    \iali{
        &\text{dist}(0, \nabla J(p^{k+1}) + N(p^{k+1})) \notag\\
        &\leq \norm{t T_{k} (p^{k} - p^{k+1}) - (\nabla J(p^{k}) - \nabla J(p^{k+1}))} \notag\\
        &\leq \left( t \norm{T_{k}} + L_{G^*} \lambda_{\text{max}}(K^\top K) \right) \norm{p^{k} - p^{k+1}}.
      }
    Convergence of $\{p^k\}$ to $p^*$ implies $\norm{p^{k} - p^{k+1}} \to 0$ and \eqref{eq:optzero} follows by
    \ref{asm:a2}. Since the active set is constant for $k \geq \bar{k}$ we have by \ref{asm:a3} that $T_k \equiv T$.

    (ii). Assume in the following that $k\geq \bar{k}$. Since $T_k = T$ due to (i), \eqref{eq:ppg} is equivalent the projected gradient descent applied to
  \begin{equation}
    \min_{q \in \bR^\cE} ~ \tilde J(q) \quad \text{ s.t.} ~\norm{T^{-1/2} q}_\infty \leq 1,
    \label{eq:psc}
  \end{equation}
  under the change of variable $p = T^{-1/2} q$, $\tilde J = J \circ T^{-1/2}$.
  The iteration in $q$ is given by
  \iali{
    & q^{k+1} = \underset{\substack{\norm{T^{-1/2} q}_\infty \leq 1}}{\argmin} ~ \langle \nabla \tilde J(q^k), q \rangle + \frac{t}{2} \norm{q - q^k}^2,
    \label{eq:ppgt}
  }
  whose optimality condition reads
  \iali{ \label{eq:qopt}
    t(q^{k}-q^{k+1}) \in T^{-1/2} N(T^{-1/2} q^{k+1}) + \nabla \tilde J(q^k).
  }
  From (i) we know that $P_{\cA^*} p^{k+1} = P_{\cA^*} p^k$, which yields
  \iali{
    &P_{\cA^*} T^{-1/2} q^{k+1} = P_{\cA^*} T^{-1/2} q^k, \notag \\
    &\Rightarrow q^{k+1} - q^k \in \ker(P_{\cA^*} T^{-1/2}) = U(\cA^*).
    \label{eq:z1}
  }
  In addition, 
  in view of \eqref{eq:ncdef}
  we have
  \iali{
  T^{-1/2}N(T^{-1/2}q^k) \subset U(\cA^*)^\perp. 
  }
  Thus, applying $\Pi_{U(\cA^*)}$ on both sides of \eqref{eq:qopt} yields an \emph{equivalent} characterization:
  \iali{
  0 &= \Pi_{U(\cA^*)}\nabla \tilde J(q^k) + t(q^{k+1} - q^k). \label{eq:qopt2}
  }
  Indeed, this is the gradient descent on $\tilde J$ restricted to $U(\cA^*)$, which we rewrite as
    \begin{align}
      q^{k+1} &= q^k + t^{-1} \Pi_{U(\cA^*)} T^{-1/2} K^\top \nabla G^*(-K^\top T^{-1/2} q^k) \notag\\
      &= q^k + t^{-1} \Pi_{U(\cA^*)} T^{-1/2} K^\top \nabla G^*(-K^\top T^{-1/2}  \notag\\
      &\quad (\Pi_{U(\cA^*)} q^k + \Pi_{U(\cA^*)^\perp} q^{\bar{k}})). \label{eq:ppgtl}
    \end{align}
  Hence \eqref{eq:ppgt} is equivalent to gradient descent
  on the function $G^* \circ (A \cdot \, + \, b)$ with $A = -K^\top T^{-1/2} \Pi_{U(\cA^*)}$, $b = -K^\top T^{-1/2} \Pi_{U(\cA^*)^\perp} p^{\bar{k}}$. Using Lemma~\ref{lm:cg} yields
  the linear convergence in $\{ q^k \}$. As $\norm{q^k} = \norm{T^{1/2} p^k} = \norm{p^k}_T$, we achieve the linear convergence, with respect to the $T$-norm, of the original sequence $\{p^k\}$. 
\end{proof}

\begin{corollary}
  Let $\varphi$ be given as in \eqref{eq:local-rate}.
  Locally (i.e., for $k \geq \bar{k}$), with fixed $T \equiv T_{\bar{k}}$ we have $\norm{p^k - p^*} \leq \varepsilon$ whenever
  \begin{equation}
    k \geq \bar{k} + \frac{\varphi + 1}{2} \log\left(\frac{\norm{p^{\bar{k}} - p^*} \sqrt{\kappa(T)}}{\varepsilon}\right).
    \label{eq:bound1}
  \end{equation}
  \label{cor:conv}
\end{corollary}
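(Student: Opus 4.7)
The plan is to deduce Corollary~\ref{cor:conv} directly from the local linear convergence bound \eqref{eq:local-conv} in Theorem~\ref{thm:conv}(ii), after (a) converting the $T$-norm estimate into a standard Euclidean one, and (b) replacing the geometric factor $r=(\varphi-1)/(\varphi+1)$ with a convenient logarithmic lower bound.

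First, I would invoke Theorem~\ref{thm:conv}(ii) to write
\[
\norm{p^{k}-p^*}_T \leq r^{k-\bar{k}}\,\norm{p^{\bar{k}}-p^*}_T,\qquad r=\tfrac{\varphi-1}{\varphi+1}.
\]
Since $T$ is symmetric positive definite, the elementary sandwich
\[
\sqrt{\lambda_{\min}(T)}\,\norm{x}\;\leq\;\norm{x}_T\;\leq\;\sqrt{\lambda_{\max}(T)}\,\norm{x}
\]
yields $\norm{p^{k}-p^*} \leq \sqrt{\kappa(T)}\, r^{k-\bar{k}}\,\norm{p^{\bar{k}}-p^*}$. Thus it suffices to determine $k$ so that
$r^{k-\bar{k}}\leq \varepsilon\,\big/\,(\sqrt{\kappa(T)}\norm{p^{\bar{k}}-p^*})$, i.e.
\[
(k-\bar{k})\cdot\bigl(-\log r\bigr) \;\geq\; \log\!\Bigl(\tfrac{\norm{p^{\bar{k}}-p^*}\sqrt{\kappa(T)}}{\varepsilon}\Bigr).
\]

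The only non-trivial step is to bound $-\log r$ from below in a clean way. Writing $-\log r = \log\!\bigl(1+\tfrac{2}{\varphi-1}\bigr)$ and using the standard inequality $\log(1+x)\geq x/(1+x)$ for $x>-1$, one gets
\[
-\log r \;\geq\; \frac{2/(\varphi-1)}{1+2/(\varphi-1)} \;=\; \frac{2}{\varphi+1}.
\]
Consequently, $1/(-\log r)\leq (\varphi+1)/2$, so taking
\[
k-\bar{k}\;\geq\;\frac{\varphi+1}{2}\,\log\!\Bigl(\tfrac{\norm{p^{\bar{k}}-p^*}\sqrt{\kappa(T)}}{\varepsilon}\Bigr)
\]
implies $(k-\bar{k})(-\log r)\geq \log\!\bigl(\sqrt{\kappa(T)}\norm{p^{\bar{k}}-p^*}/\varepsilon\bigr)$, and hence $\norm{p^{k}-p^*}\leq\varepsilon$, which is exactly \eqref{eq:bound1}.

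There is no real obstacle here: the bulk of the work has already been carried out inside Theorem~\ref{thm:conv}. The only thing to be mildly careful about is the $T$-to-Euclidean norm conversion (which produces the $\sqrt{\kappa(T)}$ factor) and choosing a lower bound for $-\log r$ that matches the $(\varphi+1)/2$ coefficient appearing in the statement — the inequality $\log(1+x)\geq x/(1+x)$ is exactly what gives the tightest such bound and keeps the expression consistent with the form in \eqref{eq:bound1}.
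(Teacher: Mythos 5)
Your proof is correct and follows exactly the standard argument that the paper leaves implicit (the corollary is stated without an explicit proof): the $T$-to-Euclidean norm conversion via $\lambda_{\min}(T)\norm{x}^2\leq\norm{x}_T^2\leq\lambda_{\max}(T)\norm{x}^2$ produces the $\sqrt{\kappa(T)}$ factor, and the estimate $-\log\frac{\varphi-1}{\varphi+1}=\log\bigl(1+\tfrac{2}{\varphi-1}\bigr)\geq\tfrac{2}{\varphi+1}$ yields the $(\varphi+1)/2$ coefficient in \eqref{eq:bound1}.
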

We remark that there are bounds in literature on $\bar{k}$, see
\cite[Prop.~3.6]{Liang2015} or the recent works~\cite{nutini17a,nutini17b}. Analyzing which choice of variable
metric $T_k$ lead to fast identification of $\cA^*$ is beyond the scope of this work.

\section{Combinatorial preconditioner}

Suggested by the local convergence analysis and Corollary~\ref{cor:conv} from the previous section, an ideal preconditioner $T$ ought to minimize the condition number $\kappa(\Pi_{U(\cA^*)} T^{-1/2} K)$ once the active set $\cA^*$ is identified. In practice, however, computationally amenable choices of $T$ are rather constrained due to a generic \emph{trade-off} between convergence speed of (outer) iterations and per-iteration cost, i.e., the $T$-scaled proximal evaluation in \eqref{eq:ppg} or \eqref{eq:ppdhg2}.
A dense matrix $T$, in general, will render inner iterations very expensive, as in the case of proximal Newton method \cite{LSS14}. For this reason, many authors consider diagonal preconditioners \cite{PoCh11,GiBo14a,GiBo14b} or (diagonal + low-rank) preconditioners \cite{BeFa12,BeFaOchs18} to keep the inner iterations fast and tractable.

Towards yet better balance of this trade-off, a recent paper \cite{MoYeWu18} makes use of fast TV solver on trees \cite{Con13,KPR16} and proposes a class of block diagonal preconditioners via graph partitioning (aiming at optimizing $\kappa(T^{-1/2} K)$ heuristically, however). There the optimal condition number $\kappa(T^{-1/2} K)$ is achieved by matroid partitioning. As a remark, combinatorial preconditioners for solving linear systems involving graph Laplacians date back to the early work by Vaidya in 1990's \cite{vaidya90}; refer to \cite{spielman2010algorithms} for a more detailed survey. 

In this section, we construct combinatorial preconditioners which are more faithful, compared to the ones from \cite{MoYeWu18}, to the (local) convergence analysis. In a nutshell, given the current active/inactive sets of edges, we partition the graph into \emph{inactively nested forests} in the sense of \eqref{eq:ninf}, so that the resulting preconditioner yields a guaranteed (local) convergence rate, which is made precise in Theorem \ref{thm:nf}.

To construct our preconditioner, let the edge set $\cE$ be partitioned into $L$ mutually disjoint subsets, i.e., $\cE=\bigsqcup_{l=1}^L\cE_l$, such that each subgraph $\cG_l=(\cV,\cE_l,\omega|_{\cE_l})$ is a {\it forest}.
Correspondingly, we define $P_l$ as the canonical projection from $\bR^\cE$ to $\bR^{\cE_l}$, i.e., $P_lp=p|_{\cE_l}$ for any $p\in\bR^\cE$.
Thus, the matrix $K$ can be decomposed into submatrices
$\{K_l\}_{l=1}^L$ 
where each $K_l=P_l K\in\bR^{|\cE_l|\times|\cV|}$. Analogously, let $\nabla_l = P_l \nabla$. 
Note that each $\nabla_l^\top$ (or $K_l^\top$) has full column rank, and hence 
\iali{
T_l:=K_l K_l^\top,\quad \forall l\in\{1,...,L\},
} 
is symmetric positive definite.

We then define our preconditioner as
\ieqn{ \label{eq:tmtx}
\ialid{
T &:= \sum_{l=1}^L P_l^\top T_l P_l.  
}}

In view of Theorem~\ref{thm:conv}, we analyze in the following the condition number of the following matrix:
\iali{
&\Pi_\cI := K^\top T^{-1/2} \Pi_{U(\cA)} T^{-1/2} K \notag\\
 &= K^\top T^{-1/2} (I - T^{-1/2}P_\cA(T^{-1/2}P_\cA)^\dagger) T^{-1/2}K.
\label{eq:pi-i}
}
As a preparatory result, the following lemma decomposes $\Pi_\cI$ into orthogonal projections onto subspaces.

\begin{lemma} \label{lem1}
Given $\cE=\cA\sqcup\cI$, let $\cG$ be partitioned into $L$ nonempty forests $\{\cG_l\}_{l=1}^{L}$. Then the matrix defined in \eqref{eq:pi-i} can be characterized as
\iali{
\Pi_\cI = \sum_{l=1}^L \Pi_{\cI,l}, \label{eq:pi-lem}
}
where each $\Pi_{\cI,l}$ is the orthogonal projection onto the linear subspace $\cS_{\cI,l}$ defined by
\iali{ \label{eq:ranil}
\cS_{\cI,l} := \spn\{\nabla^\top_e: e\in\cI\cap\cE_l\}.
}

\end{lemma}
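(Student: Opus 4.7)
The plan is to use the block-diagonal structure induced by the forest partition to decompose $\Pi_\cI$ as $\sum_l A_l$, where each $A_l := K_l^\top T_l^{-1/2}\Pi_{U_l(\cA)} T_l^{-1/2} K_l$ acts inside a single forest, and then to identify each $A_l$ with $\Pi_{\cI,l}$. Throughout, I write $P_{\cA,l}$ for the diagonal selector of $\cA\cap\cE_l$ on $\bR^{\cE_l}$ and $U_l(\cA) := \ker(P_{\cA,l} T_l^{-1/2})$.

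First I would exploit that, because $\cE=\bigsqcup_l \cE_l$ together with $P_l P_{l'}^\top = \delta_{ll'} I$, the matrices $T$, $T^{-1/2}$, $P_\cA$ and $\Pi_{U(\cA)}$ are all block-diagonal with respect to this partition; in particular $\Pi_{U(\cA)} = \sum_l P_l^\top \Pi_{U_l(\cA)} P_l$. Substituting into \eqref{eq:pi-i} collapses all cross-forest terms and yields $\Pi_\cI = \sum_l A_l$, so it is enough to show $A_l = \Pi_{\cI,l}$ for each $l$.

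Fixing $l$, I would then use the forest hypothesis to see that the rows of $\nabla_l$ are linearly independent, hence $K_l = D_l\nabla_l$ with $D_l := \diag(\omega|_{\cE_l})$ has full row rank, $T_l = K_l K_l^\top$ is invertible, and $O_l := T_l^{-1/2} K_l$ satisfies $O_l O_l^\top = I_l$. Thus $O_l^\top$ is an isometry from $\bR^{\cE_l}$ into $\bR^\cV$, and the short calculations $A_l^\top = A_l$ and $A_l^2 = O_l^\top\Pi_{U_l(\cA)}(O_l O_l^\top)\Pi_{U_l(\cA)} O_l = A_l$ identify $A_l$ as an orthogonal projection whose rank equals $\dim U_l(\cA) = |\cI\cap\cE_l| = \dim \cS_{\cI,l}$.

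With ranks matched, the remaining step is to pin down the range. I would take any $y\in\ran(A_l)$ and write it as $y = O_l^\top w = \nabla_l^\top z$ with $w\in U_l(\cA)$ and $z := D_l T_l^{-1/2} w$. Because $D_l$ and $P_{\cA,l}$ are both diagonal on $\bR^{\cE_l}$, they commute, so $P_{\cA,l} z = D_l\bigl(P_{\cA,l} T_l^{-1/2} w\bigr) = 0$ by the defining condition of $U_l(\cA)$; hence $z$ is supported on $\cI\cap\cE_l$ and $y = \nabla_l^\top z \in \cS_{\cI,l}$. I expect this last step to be the main obstacle: the families $\{\nabla_e^\top\}_{e\in\cA\cap\cE_l}$ and $\{\nabla_e^\top\}_{e\in\cI\cap\cE_l}$ are only linearly independent (by the forest property), not orthogonal in $\bR^\cV$, so $\ran(A_l)$ cannot be read off by a naive orthogonal split inside $\cS_l = \ran(\nabla_l^\top)$; one must instead extract the explicit coefficient vector $z$ in the $\nabla_l^\top$-basis and exploit the commutativity of the diagonal factors $D_l$ and $P_{\cA,l}$.
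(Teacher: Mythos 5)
Your proof is correct and follows essentially the same route as the paper: both exploit the block-diagonal structure of $T$, $P_\cA$, and the pseudoinverse along the forest partition to reduce \eqref{eq:pi-i} to a sum of per-forest terms, and both identify each term as an orthogonal projection onto $\cS_{\cI,l}$ by checking symmetry, idempotence, and the rank count $|\cI\cap\cE_l|$. The only (equivalent) variation is in pinning down the range: the paper verifies $\Pi_{\cI,l}K_l^\top P_{\cI,l}=K_l^\top P_{\cI,l}$ to obtain $\cS_{\cI,l}\subseteq\ran\Pi_{\cI,l}$, while you establish the reverse inclusion $\ran A_l\subseteq\cS_{\cI,l}$ by extracting the coefficient vector $z$ and using that the diagonal factors $D_l$ and $P_{\cA,l}$ commute.
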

\bpf
(i) We show the identity \eqref{eq:pi-lem} with
$I_l := P_l I P_l^\top$, $P_{\cA,l} := P_l P_\cA P_l^\top$, $P_{\cI,l} := P_l P_\cI P_l^\top$, and
\iali{
\Pi_{\cI,l} :=\, & K_l^\top T_l^{-1/2} (I_l-(T_l^{-1/2}P_{\cA,l})(T_l^{-1/2}P_{\cA,l})^\dagger) \notag\\
&T_l^{-1/2} K_l.
}
Note that 
\iali{
K^\top T^{-1/2} &=
\sum_{l=1}^L K^\top P_l^\top T_l^{-1/2} P_l \notag\\
&= \sum_{l=1}^L K_l^\top T_l^{-1/2} P_l, \label{eq:pi-il-1}\\
T^{-1/2} P_\cA &=
\left(\sum_{l=1}^L P_l^\top T_l^{-1/2} P_l \right)\left(\sum_{l'=1}^L P_{l'}^\top P_{\cA,l'} P_{l'} \right) \notag\\
&= \sum_{l=1}^L P_l^\top T_l^{-1/2} P_{\cA,l} P_l, \label{eq:pi-il-2}\\
(T^{-1/2} P_\cA)^\dagger &= \sum_{l=1}^L P_l^\top (T_l^{-1/2} P_{\cA,l})^\dagger P_l. \label{eq:pi-il-3}
}
By plugging \eqref{eq:pi-il-1}--\eqref{eq:pi-il-3} into \eqref{eq:pi-i}, we accomplish (i). 

(ii) We show each $\Pi_{\cI,l}$ is the orthogonal projection onto $\cS_{\cI,l}$.
First, it is easy to see $\Pi_{\cI,l}$ is symmetric and $\Pi_{\cI,l}^2=\Pi_{\cI,l}$, and hence an orthogonal projection. 
Secondly, note that $\rnk \Pi_{\cI,l}=|\cI\cap\cE_l|=\rnk K_l^\top P_{\cI,l}$. Furthermore, we have the following equation:
\iali{
\Pi_{\cI,l}K^\top_lP_{\cI,l} =\,& K_l^\top P_{\cI,l} - K_l^\top T_l^{-1/2} \notag\\
& (T_l^{-1/2}P_{\cA,l})(T_l^{-1/2}P_{\cA,l})^\dagger T_l^{1/2}P_{\cI,l} \notag\\
=\,& K_l^\top P_{\cI,l},
}
which completes step (ii).
\epf

\begin{theorem} \label{thm:nf}
Given $\cE=\cA\sqcup\cI$, let $\cG$ be partitioned into $L$ \emph{nonempty}, \emph{inactively nested} forests $\{\cG_l\}_{l=1}^{L}$ in the sense that
\iali{
\cS_{\cI,1} = ... = \cS_{\cI,{\hat l}} \supsetneq \cS_{\cI,{\hat l +1}} \supseteq ... \supseteq \cS_{\cI,L} \supsetneq \{0\}, \label{eq:ninf}
}
with the subspaces defined in \eqref{eq:ranil}.
Then we have $\lambda_{\min>0}(\Pi_\cI)=\hat l$ and the (local) convergence rate in Theorem~\ref{thm:conv} is $\varphi = (L / \widehat l) \cdot \kappa(G^*)$.
\label{thm:cn}
\label{thm:convrate}
\end{theorem}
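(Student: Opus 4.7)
My plan is to build directly on Lemma~\ref{lem1}, which already decomposes $\Pi_\cI = \sum_{l=1}^L \Pi_{\cI,l}$ as a sum of orthogonal projections onto the nested subspaces $\cS_{\cI,l}$. The idea is to exploit the nesting to simultaneously diagonalize these projections and read off the spectrum of $\Pi_\cI$ layer by layer.

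Concretely, I would introduce the orthogonal ``layer'' complements $W_l := \cS_{\cI,l} \ominus \cS_{\cI,l+1}$ for $l = 1, \ldots, L-1$ and $W_L := \cS_{\cI,L}$. For any $v \in W_l$, the nesting $\cS_{\cI,l'} \supseteq \cS_{\cI,l}$ for $l' \leq l$ gives $\Pi_{\cI,l'}v = v$, while $v \perp \cS_{\cI,l+1} \supseteq \cS_{\cI,l'}$ for $l' > l$ gives $\Pi_{\cI,l'}v = 0$. Summing, $\Pi_\cI v = l\,v$. Since the $W_l$ are mutually orthogonal and together span $\cS_{\cI,1}$ (which coincides with the range of $\Pi_\cI$), the positive spectrum of $\Pi_\cI$ is exactly $\{\, l : \dim W_l > 0 \,\}$, counted with multiplicity $\dim W_l$.

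From the nesting hypothesis \eqref{eq:ninf}, the plateau $\cS_{\cI,1} = \cdots = \cS_{\cI,\hat l}$ forces $W_1 = \cdots = W_{\hat l - 1} = \{0\}$, the first strict inclusion $\cS_{\cI,\hat l} \supsetneq \cS_{\cI,\hat l+1}$ guarantees $W_{\hat l} \neq \{0\}$, and the final strict inclusion $\cS_{\cI,L} \supsetneq \{0\}$ gives $W_L \neq \{0\}$. Hence $\lambda_{\min>0}(\Pi_\cI) = \hat l$ and $\lambda_{\max}(\Pi_\cI) = L$. Since $\Pi_{U(\cA^*)}$ is itself an orthogonal projection, $\Pi_\cI = (\Pi_{U(\cA^*)} T^{-1/2} K)^\top (\Pi_{U(\cA^*)} T^{-1/2} K)$, so its eigenvalues are the squared singular values of $\Pi_{U(\cA^*)} T^{-1/2} K$ and consequently $\kappa(\Pi_{U(\cA^*)} T^{-1/2} K)^2 = L / \hat l$. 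Plugging this into the rate \eqref{eq:local-rate} of Theorem~\ref{thm:conv} yields $\varphi = (L/\hat l)\cdot \kappa(G^*)$.

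The one step requiring real attention is the spectral decomposition argument: one must verify that $\cS_{\cI,1} = \bigoplus_{l = \hat l}^{L} W_l$ as an orthogonal direct sum, which is a telescoping statement along the nested chain, and that this decomposition actually diagonalizes every $\Pi_{\cI,l}$ simultaneously. Everything else (eigenvalues of $\Pi_\cI$, relation to $\kappa(\Pi_{U(\cA^*)} T^{-1/2} K)$, and substitution into Theorem~\ref{thm:conv}) is routine once this layerwise action is in hand.
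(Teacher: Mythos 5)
Your proof is correct and follows essentially the same route as the paper's: both arguments rest on the decomposition $\Pi_\cI=\sum_{l=1}^L\Pi_{\cI,l}$ from Lemma~\ref{lem1} together with the observation that a vector in $\cS_{\cI,L}$ is fixed by all $L$ projections while a nonzero vector in $\cS_{\cI,\hat l}\ominus\cS_{\cI,\hat l+1}$ is fixed by exactly the first $\hat l$ of them. The only difference is presentational: the paper extracts just the two extreme eigenvalues via Rayleigh-quotient bounds and witness vectors, whereas your layer decomposition into the $W_l$ simultaneously diagonalizes all the $\Pi_{\cI,l}$ and yields the full spectrum of $\Pi_\cI$ with multiplicities, a correct mild strengthening.
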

\bpf
By Lemma \ref{lem1}, we have $\lambda_{\max}(\Pi_\cI)\leq \sum_{l=1}^L\lambda_{\max}(\Pi_{\cI,l}) \leq L$. In fact, the equality holds since $\Pi_\cI v=Lv$ for some nonzero $v\in\cS_{\cI,L}$. On the other hand, for any $v\in\ran\Pi_\cI$, we have $\ip{v}{\Pi_\cI v}\geq \sum_{l=1}^{\hat l} \ip{v}{\Pi_{\cI,l}v} = \hat{l}\|v\|^2$.  The equality holds for some nonzero $v\in \ran\Pi_\cI\cap (\cS_{\cI,\hat l+1})^\perp$.
This yields $\lambda_{\min>0}(\Pi_\cI)=\hat l$.
\epf

\section{Implementation}
\label{sec:impl}
In this section, we specify 
how to construct our preconditioner and apply active-set-based reconditioning to both PG and PDHG algorithms. 
We only trigger reconditioning every $n$ iterations. 
When reconditioning is performed at iteration $k$,
a greedy heuristic is used for constructing the preconditioner $T_k$; see Section \ref{sec:recond}. 
Then, using the separability of $\norm{\cdot}_\infty$, 
we perform the updates across the subgraphs $\{\cG_l\}_{l=1}^L$:
\iali{
	p^{k+1}|_{\cE_l} &= \text{arg} \min_{p \in \bR^{\cE_l}} - \ip{K \bar{u}^k|_{\cE_l}}{p} \notag \\
										 &\qquad + \delta \{ \norm{p}_\infty \leq 1 \} + \frac{t}{2}\norm{p-p^k}^2_{T_{k,l}},
                                                                                 \label{eq:dual-upd}
}
where $\bar{u}^k$ is defined as:
\begin{equation}
	\bar{u}^k = \begin{cases}
		\nabla G^*(-K^\top p^k), & \text{for PG},\\
		2u^{k+1}-u^k, & \text{for PDHG}.
	\end{cases}
\end{equation}
The proximal evaluation required by \eqref{eq:dual-upd} is detailed in Section \ref{sec:dual-up}, which invokes the message-passing algorithm on trees. The overall complexity of the reconditioned algorithm is discussed in Section \ref{sec:complex}.

\subsection{Constructing preconditioner} \label{sec:recond}
Following Theorem~\ref{thm:cn} we aim to find a preconditioner $T_k$ which minimizes the condition number $\varphi 
= (L / \widehat l) \cdot \kappa(G^*)$, and hence the local linear convergence rate. 
Theoretically, optimal $T_k$ can be found in polynomial time by Matroid partitioning as in \cite{MoYeWu18}. The computation time is prohibitively large for the graphs in practical problems, however. 
Here we present a greedy heuristic to find inactively nested forests.

Given an input graph $\cG$ we partition the graph based on the active set at the current dual variable $p^k$. We assign to each edge $e \in \cE$ an additional weight $\rho_e = 1- \left| 1-\left| p^k_e \right| \right|$.
Then, a minimum spanning forest according to that weight is generated using Kruskal's algorithm~\cite{kruskal1956}. This spanning forest is then subtracted from current graph and will be added to the set $\{
\cG_l\}_{l=1}^L$. We perform this generation and subtraction iteratively until no edges remain in the original graph. 

The partitioning weight is introduced for two reasons: Firstly, we found it unstable to determine the active set $\cA(p^k)$ numerically according to a threshold;
Secondly, computing the preconditioner $T_k$ is quite expensive for large graphs. This strategy could extend the suitable duration of current preconditioner since a potential active edge often has a larger partitioning weight.

\subsection{Backward solver} \label{sec:dual-up}
The introduction of the proposed preconditioner $T_k$ makes the backward update \eqref{eq:dual-upd} more expensive. Here we describe how to solve it
efficiently, following the approach in~\cite{MoYeWu18}. 
Combining the linear and the quadratic term, \eqref{eq:dual-upd} can be re-written as:
\begin{equation}
	p^{k+1}|_{\cE_l}  = \text{arg} \min_{\norm{p}_\infty \leq 1} \frac{1}{2} \norm{K^\top_l p + f_l}^2,
        \label{eq:dual-upd2}
\end{equation}
where $f_l = -K_l^\top p^k|_{\cE_l} - \bar{u}^k/t$. The (Fenchel) dual problem of \eqref{eq:dual-upd2} is given by
\begin{equation}
	v_l = \text{arg}\min_{u \in \bR^\cV} \frac{1}{2}\norm{u-f_l}^2 + \norm{K_l u}_1,
        \label{eq:tv-on-tree}
\end{equation}
which is simply a weighted total variation problem on the individual trees in the forest $\cG_
l$. We solve the problem \eqref{eq:tv-on-tree} using the message-passing algorithm introduced in \cite{KPR16}.
To retrieve $p^{k+1}|_{\cE_l}$ from $v_l$ one can use the optimality condition:
\begin{equation}
	K_l^\top p^{k+1} |_{\cE_l} = v_l - f_l.
\end{equation}

\subsection{Discussion on complexity} \label{sec:complex}
For non-preconditioned proximal gradient, the complexity of each iteration
is $\cO( |\cE| )$. For the preconditioned variant it is $\cO( \sum_{t=1}^\cT |\cE_t| \log( |\cE_t|))$ where $\cT$ is the total number of trees
using the aforementioned message-passing algorithm~\cite{KPR16}. The preconditioned
update can still be parallelized to some extent, as the message-passing can run for each tree in parallel.

Construction of the preconditioner $T_k$ based on the greedy inactively nested forest strategy with Prim's or Kruskal's algorithm is $\cO( |\cE|^2 \log(|\cE|) / |\cV|)$ \cite{cheriton76}. 

After entering the local linear convergence phase,
the overall iteration complexity is $\mathcal{O}(\varphi \log(1/\varepsilon))$ to
find an $\varepsilon$-accurate solution (see Corollary~\ref{cor:conv}). While each iteration of the preconditioned algorithm
is slighty more costly (by roughly a factor of $\log(|\cE|)$), the condition number $\varphi$ 
is drastically reduced. For regular grids we have that $\varphi \in \cO(|\cV|)$ (cf. \cite[Theorem~4]{MoYeWu18}) in the non-preconditioned 
case. The proposed preconditioner improves this to a \emph{constant} $\varphi \in \cO(1)$, independent of problem size
at the expense of a slightly more expensive dual update step (up to a logarithmic factor).

\newenvironment{customlegend}[1][]{%
  \begingroup
  \csname pgfplots@init@cleared@structures\endcsname
  \pgfplotsset{#1}%
}{%
  \csname pgfplots@createlegend\endcsname
  \endgroup
}%
\def\addlegendimage{\csname pgfplots@addlegendimage\endcsname}

\newcommand{\figurescale}{0.95}
\begin{figure*}[t!]
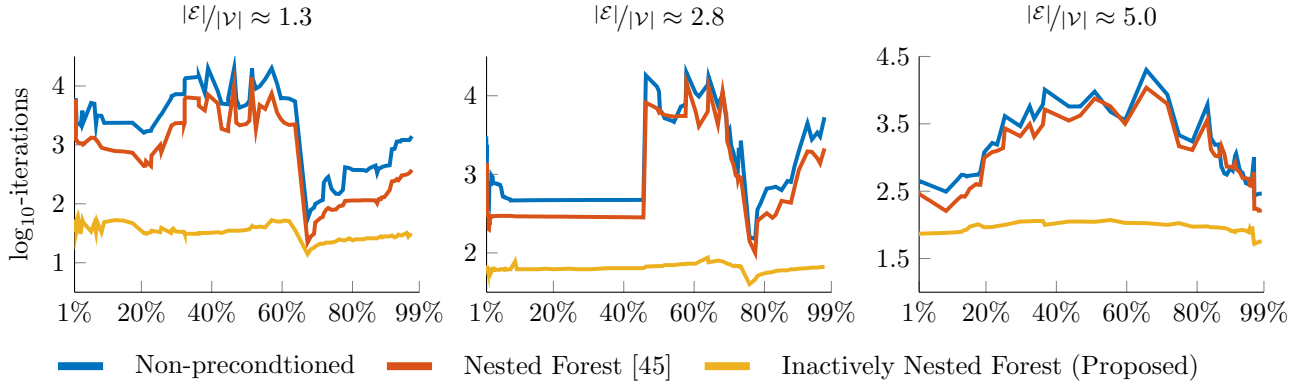

  \centering
  \begin{tabular}{ccc}
    \input{active_sparse_new.tex} &\hspace{-0.5cm}
    \input{active_medium_new.tex} &\hspace{-0.5cm}
%
%
\begin{tikzpicture}

\definecolor{mycolor1}{rgb}{0.00000,0.44700,0.74100}%
\definecolor{mycolor2}{rgb}{0.85000,0.32500,0.09800}%
\definecolor{mycolor3}{rgb}{0.92900,0.69400,0.12500}%
\begin{axis}[%
title=$\nicefrac{|\cE|}{|\cV|} \approx 5.0$,
width=1.9in,
height=1.3in,
at={(0.772in,0.516in)},
scale only axis,
xmin=0.01,
xmax=0.99,
ymin=1,
ymax=4.5,
axis background/.style={fill=white},
axis x line*=bottom,
axis y line*=left,
ytick={1.5,2.5,3.5,4.5},
xtick={0.01, 0.2, 0.4, 0.6, 0.8, 0.99},
xticklabels={$1\%$,$20\%$,$40\%$,$60\%$,$80\%$,$99\%$},
legend style={legend cell align=left, align=left, draw=white!15!black},
scale=\figurescale
]
\addplot [color=mycolor1, line width=1.5pt]
  table[row sep=crcr]{%
0	2.15836249209525\\
0.000386100386100386	2.21218760440396\\
0.00154440154440154	2.27875360095283\\
0.00501930501930502	2.58433122436753\\
0.0108108108108108	2.65127801399814\\
0.0864864864864865	2.49276038902684\\
0.12007722007722	2.67851837904011\\
0.130501930501931	2.74350976472843\\
0.13976833976834	2.73639650227664\\
0.145559845559846	2.71850168886727\\
0.153281853281853	2.73078227566639\\
0.172200772200772	2.74429298312268\\
0.176833976833977	2.7427251313047\\
0.183783783783784	2.78247262416629\\
0.194980694980695	3.07773117965239\\
0.217760617760618	3.21563756343506\\
0.231660231660232	3.19117145572856\\
0.248648648648649	3.51308436046514\\
0.253667953667954	3.61468634228201\\
0.297297297297297	3.46448954743397\\
0.324324324324324	3.7592900330243\\
0.337065637065637	3.57818060962778\\
0.361776061776062	3.78738962135211\\
0.367181467181467	4.00697919057428\\
0.435521235521236	3.75709222011893\\
0.467953667953668	3.75769962508774\\
0.508880308880309	3.97872817713849\\
0.553667953667954	3.67906431812131\\
0.595366795366795	3.55303301620244\\
0.655598455598456	4.30102999566398\\
0.715444015444015	3.93449845124357\\
0.748648648648649	3.33183204443625\\
0.786486486486486	3.23879856271392\\
0.829343629343629	3.78433194802215\\
0.839382239382239	3.28780172993023\\
0.851351351351351	3.18836592606315\\
0.861776061776062	3.19451434188247\\
0.868725868725869	2.86864443839483\\
0.877606177606178	2.76267856372744\\
0.884169884169884	2.74585519517373\\
0.893436293436293	2.82930377283102\\
0.896911196911197	2.82607480270083\\
0.901158301158301	2.8055008581584\\
0.908108108108108	3.02857125269254\\
0.910810810810811	3.05880548667591\\
0.921621621621622	2.80753502806885\\
0.929343629343629	2.77158748088126\\
0.938996138996139	2.6170003411209\\
0.945945945945946	2.60422605308447\\
0.949034749034749	2.59879050676312\\
0.95019305019305	2.5910646070265\\
0.954440154440154	2.59659709562646\\
0.956370656370656	2.58994960132571\\
0.956370656370656	2.72672720902657\\
0.961776061776062	3.00603795499732\\
0.962548262548263	2.43616264704076\\
0.967953667953668	2.44715803134222\\
0.972586872586873	2.45484486000851\\
0.974903474903475	2.46089784275655\\
0.983783783783784	2.46538285144842\\
};

\addplot [color=mycolor2, line width=1.5pt]
  table[row sep=crcr]{%
0	2.05307844348342\\
0.000386100386100386	2.07554696139253\\
0.00154440154440154	2.13672056715641\\
0.00501930501930502	2.51587384371168\\
0.0108108108108108	2.45636603312904\\
0.0864864864864865	2.20682587603185\\
0.12007722007722	2.38201704257487\\
0.130501930501931	2.42651126136458\\
0.13976833976834	2.41830129131975\\
0.145559845559846	2.45939248775923\\
0.153281853281853	2.53402610605613\\
0.172200772200772	2.60638136511061\\
0.176833976833977	2.59549622182557\\
0.183783783783784	2.59439255037543\\
0.194980694980695	2.99738638439731\\
0.217760617760618	3.08098704691089\\
0.231660231660232	3.09898963940118\\
0.248648648648649	3.13956426617585\\
0.253667953667954	3.4334497937616\\
0.297297297297297	3.31618009889345\\
0.324324324324324	3.49554433754645\\
0.337065637065637	3.30920417967041\\
0.361776061776062	3.48770386316373\\
0.367181467181467	3.71323846154566\\
0.435521235521236	3.54814363743485\\
0.467953667953668	3.62376600013393\\
0.508880308880309	3.87644878687834\\
0.553667953667954	3.75981887737483\\
0.595366795366795	3.50023647482564\\
0.655598455598456	4.03686832998106\\
0.715444015444015	3.79643555881017\\
0.748648648648649	3.16731733474818\\
0.786486486486486	3.11159852488039\\
0.829343629343629	3.56193576331378\\
0.839382239382239	3.11892575282578\\
0.851351351351351	3.02571538390134\\
0.861776061776062	3.01410032151962\\
0.868725868725869	3.17231096852195\\
0.877606177606178	3.04960561259497\\
0.884169884169884	2.86510397464113\\
0.893436293436293	2.86272752831797\\
0.896911196911197	2.85733249643127\\
0.901158301158301	2.85064623518307\\
0.908108108108108	2.9216864754836\\
0.910810810810811	2.83122969386706\\
0.921621621621622	2.78103693862113\\
0.929343629343629	2.6954816764902\\
0.938996138996139	2.69196510276736\\
0.945945945945946	2.68574173860226\\
0.949034749034749	2.66838591669\\
0.95019305019305	2.58433122436753\\
0.954440154440154	2.57749179983723\\
0.956370656370656	2.63447727016073\\
0.956370656370656	2.63346845557959\\
0.961776061776062	2.77815125038364\\
0.962548262548263	2.2380461031288\\
0.967953667953668	2.2380461031288\\
0.972586872586873	2.23299611039215\\
0.974903474903475	2.20951501454263\\
0.983783783783784	2.2148438480477\\
};

\addplot [color=mycolor3, line width=1.5pt]
  table[row sep=crcr]{%
0	1.77085201164214\\
0.000386100386100386	1.80617997398389\\
0.00154440154440154	1.83250891270624\\
0.00501930501930502	1.85125834871908\\
0.0108108108108108	1.86923171973098\\
0.0864864864864865	1.88081359228079\\
0.12007722007722	1.89209460269048\\
0.130501930501931	1.89762709129044\\
0.13976833976834	1.92427928606188\\
0.145559845559846	1.92941892571429\\
0.153281853281853	1.9731278535997\\
0.172200772200772	2\\
0.176833976833977	2.00860017176192\\
0.183783783783784	1.99563519459755\\
0.194980694980695	1.96378782734556\\
0.217760617760618	1.96848294855394\\
0.231660231660232	1.98227123303957\\
0.248648648648649	1.99563519459755\\
0.253667953667954	2\\
0.297297297297297	2.04921802267018\\
0.324324324324324	2.05307844348342\\
0.337065637065637	2.05690485133647\\
0.361776061776062	2.05690485133647\\
0.367181467181467	2\\
0.435521235521236	2.04921802267018\\
0.467953667953668	2.04921802267018\\
0.508880308880309	2.04532297878666\\
0.553667953667954	2.07188200730613\\
0.595366795366795	2.02938377768521\\
0.655598455598456	2.02530586526477\\
0.715444015444015	2\\
0.748648648648649	2.02530586526477\\
0.786486486486486	1.9731278535997\\
0.829343629343629	1.97772360528885\\
0.839382239382239	1.96378782734556\\
0.851351351351351	1.96378782734556\\
0.861776061776062	1.95904139232109\\
0.868725868725869	1.95424250943932\\
0.877606177606178	1.95424250943932\\
0.884169884169884	1.94448267215017\\
0.893436293436293	1.92427928606188\\
0.896911196911197	1.90848501887865\\
0.901158301158301	1.90848501887865\\
0.908108108108108	1.89209460269048\\
0.910810810810811	1.91381385238372\\
0.921621621621622	1.91907809237607\\
0.929343629343629	1.92427928606188\\
0.938996138996139	1.91381385238372\\
0.945945945945946	1.89209460269048\\
0.949034749034749	1.93951925261862\\
0.95019305019305	1.91907809237607\\
0.954440154440154	1.89762709129044\\
0.956370656370656	1.88081359228079\\
0.956370656370656	1.9731278535997\\
0.961776061776062	1.72427586960079\\
0.962548262548263	1.7160033436348\\
0.967953667953668	1.73239375982297\\
0.972586872586873	1.74036268949424\\
0.974903474903475	1.7481880270062\\
0.983783783783784	1.75587485567249\\
};

\end{axis}
\end{tikzpicture}
  \end{tabular}
  \begin{tikzpicture}
    \begin{customlegend}[legend columns=3,legend style={align=center,draw=none,column sep=2ex},
      legend entries={Non-precondtioned,
        Nested Forest~\cite{MoYeWu18},
        Inactively Nested Forest (Proposed),
      }]
      \definecolor{mycolor1}{rgb}{0.00000,0.44700,0.74100}%
      \definecolor{mycolor2}{rgb}{0.85000,0.32500,0.09800}%
      \definecolor{mycolor3}{rgb}{0.92900,0.69400,0.12500}%
      \addlegendimage{mark=none,solid,line legend,line width=3pt,color=mycolor1}
      \addlegendimage{mark=none,solid,line width=3pt,color=mycolor2}   
      \addlegendimage{mark=none,solid,line width=3pt,color=mycolor3}   
    \end{customlegend}
  \end{tikzpicture}
	\caption{We show $\log_{10}$-iterations required by PG to reach a primal-dual gap smaller than $10^{-10}$ over percentage of active edges at the optimal solution for random graphs with varying edge-to-vertex ratio. The reconditioning strategy requires several orders of magnitude less iterations than no preconditioner and the preconditioner~\cite{MoYeWu18}. \vspace{0.2cm}}
 \label{fig:comp_act}
\end{figure*}

\begin{figure}[t!]
  \renewcommand{\figurescale}{1}
  \centering
  \input{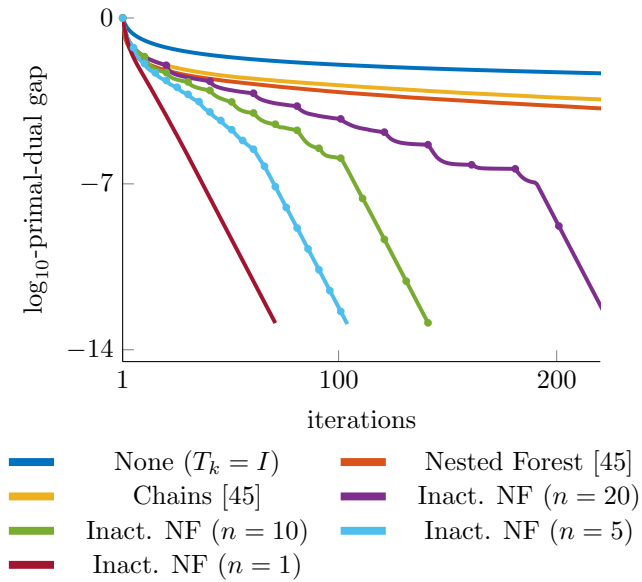}
  \begin{tikzpicture}
    \begin{customlegend}[legend columns=2,legend style={align=center,draw=none,column sep=2ex},
      legend entries={
        None ($T_k = I$),
        Nested Forest~\cite{MoYeWu18},
        Chains~\cite{MoYeWu18},
        Inact. NF ($n=20$),
        Inact. NF ($n=10$),
        Inact. NF ($n=5$),
        Inact. NF ($n=1$),
      }]
      \definecolor{mycolor1}{rgb}{0.00000,0.44700,0.74100}%
      \definecolor{mycolor2}{rgb}{0.85000,0.32500,0.09800}%
      \definecolor{mycolor3}{rgb}{0.92900,0.69400,0.12500}%
      \definecolor{mycolor4}{rgb}{0.49400,0.18400,0.55600}%
      \definecolor{mycolor5}{rgb}{0.46600,0.67400,0.18800}%
      \definecolor{mycolor6}{rgb}{0.3010,0.7450,0.9330}
      \definecolor{mycolor7}{rgb}{0.6350,0.0780,0.1840}
      \addlegendimage{mark=none,solid,line legend,line width=3pt,color=mycolor1}
      \addlegendimage{mark=none,solid,line width=3pt,color=mycolor2}   
      \addlegendimage{mark=none,solid,line width=3pt,color=mycolor3}   
      \addlegendimage{mark=none,solid,line width=3pt,color=mycolor4}
      \addlegendimage{mark=none,solid,line width=3pt,color=mycolor5}
      \addlegendimage{mark=none,solid,line width=3pt,color=mycolor6}
      \addlegendimage{mark=none,solid,line width=3pt,color=mycolor7}
    \end{customlegend}
  \end{tikzpicture}
 \caption{We show $\log_{10}$-primal dual gap vs iterations for PG \eqref{eq:ppg} with various choices of $T_k$. The non-preconditioned choice $T_k = I$ performs the worst, 
 followed by the preconditioners proposed in \cite{MoYeWu18}. We indicate reconditioning by a dot and carry it out every $n$ iterations for $n \in \{ 20, 10, 5, 1\}$. Smaller $n$ leads to an increasingly 
improved performance.}
 \label{fig:theory}
\end{figure}

\section{Applications}
In the following experiments we compare four preconditioning strategies: non-preconditioned $T_k = I$, diagonally scaled $T_k = \diag(K K^\top)$, nested (linear) forest from \cite{MoYeWu18} and the

\subsection{Numerical validation on synthetic data}
As a first numerical example, we consider the fused Lasso~\cite{tibshirani2005sparsity} (also called ROF model in imaging~\cite{Rudin-Osher-Fatemi-92})
\begin{equation}
	\min_{u \in \bR^{\cV}}~ \frac{1}{2} \norm{u-f}^2 + \norm{Ku}_1.
	\label{eq:ROF}
\end{equation}
We solve \eqref{eq:ROF} on random graphs with fixed $|\cV| = 512$ using proximal gradient (PG) with $f$ chosen uniformly random in $[0,1]$. We consider two factors: edge-to-vertex ratio and percentage of active edges at the optimal
solution. For the proposed reconditioning we set the frequency to $n=1$. 

\begin{table*}[t!]
	\centering
	\begin{tabular}{lc||cc|cc|cc|cc|cc}
		\multicolumn{2}{c||}{Instance}& \multicolumn{2}{c|}{None} & \multicolumn{2}{c|}{Diagonal} & \multicolumn{2}{c|}{Nest. Forest} & \multicolumn{2}{c|}{Lin. Forest} & \multicolumn{2}{c}{Inact. NF} \\
		name &  $\frac{|\cA_*|}{|\cE|}$ & it[$10^3$] & time[s] & it[$10^3$] & time[s] & it[$10^3$] & time[s] & it[$10^3$] & time[s] & it[$10^3$] & time[s] \\
		\hline
		rmf-long & 0.02 & -- & -- & 19 & 473 & 12 & 2539 & 18 & \textbf{233.3} & \textbf{1.9} & 474.9 \\
		rmf-wide & 0.19 & -- & -- & 62 & 665 & 27 & 2274 & 43 & 213.1 & \textbf{0.19} & \textbf{18.54} \\
		horse & 0.02 & -- & -- & -- & -- & 2.9 & 340.8 & 37 & 355.6 & \textbf{0.73} & \textbf{155.3}\\
		alue & 0.03 & -- & -- & -- & -- & 4.5 & 117.2 & 100 & 270.9 & \textbf{0.71} & \textbf{155.3}\\
		lux & 0.01 & -- & -- & -- & -- & 13 & 1254 & -- & -- & \textbf{0.40} & \textbf{54.34} \\
		punch & 0.01 & 488 & 968 & -- & -- & 14.9 & 1445 & 203 & 872.1 & \textbf{0.34} & \textbf{62.66} \\
		BVZ* & 0.35 & 27 & 74.0 & 22 & 730 & 1.12 & 434.8 & 0.57 & \textbf{6.59} & \textbf{0.49} & 261\\
		manga* & 0.05 & -- & -- &  -- & -- & 38 & 48591 & 5.3 & \textbf{230} & \textbf{1.41} & 4609 \\
		KZ2 & 0.5 & 419 & 3042 & 1.6 & 159 & 0.43 & 614.1 & 0.6 & \textbf{56.0} & \textbf{0.42} & 965.9 \\
		ferro & 0.09 & 9.25 & 186 & 5.83 & 639.6 & 0.36 & 430.3 & 0.93 & \textbf{86.23} & \textbf{0.27} & 609.3\\
		\hline
	\end{tabular}
\caption{We show the number of iterations and running time to reach a relative primal dual gap less than $10^{-10}$ on \eqref{eq:ROF} on real-world graphs. FISTA with various choices of $T_k$ is used to solve these problems. ``--'' means the algorithm failed to reach the tolerance within $5 \times 10^5$ iterations. ``*'' means that graph has a grid structure. \vspace{0.2cm}}
\label{tab:comp_ROF}
\end{table*}
 The results are shown in Fig.~\ref{fig:comp_act}. For reasonable amounts of active edges at the solution ($30\%$ -- $80\%$) the proposed preconditioning strategy requires orders of magnitude less
iterations to reach a primal-dual gap under $10^{-10}$. Moreover, it is shown that we require the fewest iterations across all scenarios.

In Fig.~\ref{fig:theory} we show $\log_{10}$-primal-dual gap over iterations for PG applied to \eqref{eq:ROF} on a $100 \times 100$ grid graph with different choices of $T_k$ and
moderate regularization strength ($30\%$ of active edges at the optimal solution). The proposed preconditioner outperforms vanilla PG ($T_k = I$) and the recent (fixed) preconditioners proposed in \cite{MoYeWu18}. Reconditioning more often leads to faster convergence, but as recomputing the preconditioner is expensive there is a trade-off 
between reducing the number of iterations and fast updates. In practice, a choice of the reconditioning frequency $n$ between $5$ and $30$ leads to the best performance.

\subsection{Fused Lasso on real-world graphs}
To consider a more realistic scenario, we solve the model \eqref{eq:ROF} on real-world graphs from a popular graphcut benchmark considered in~\cite{goldberg11}. 
Furthermore, instead of using standard PG we used the accelerated FISTA variant~\cite{ChDo2015, attPey2015, Liang2015} with overrelaxation parameter $\beta_k = (k-1)/(k+2)$. Reconditioning
takes place at every 30 iterations. We discard the momentum for one iteration after reconditioning, which improved the stability.
In Table \ref{tab:comp_ROF}, we show the running time and number of iterations of FISTA with non-preconditioned, diagonal preconditioner, nested forest~\cite{MoYeWu18}, linear forest~\cite{MoYeWu18}
and the proposed inactively nested forest. 

Our preconditioner outperforms the other methods in all cases on number of iterations, despite a rather large choice of $n=30$. However, the linear forest
from~\cite{MoYeWu18} perform better with respect to the running time on 5 out of 10 datasets. 
The two datasets with grid structures leads to chain partition on which message-passing is much faster than on trees. 
The sizes of last two graphs are huge ($|\cV| \approx 250.000$, $|\cE| \approx 600.000$) and therefore partitioning is quite expensive. To summarize, the proposed preconditioning
strategy consistently improves the number of iterations, but to ensure a shorter overall running time, an efficient implementation or improved strategy on reconstructing the tree decomposition
might be required.

\begin{figure}
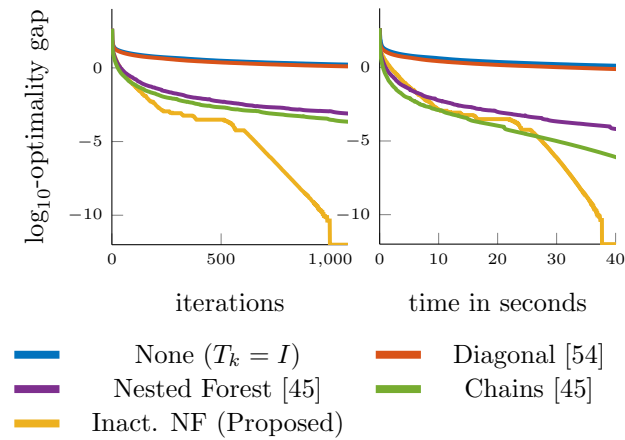

  \centering
  \begin{tabular}{cc}
    \input{pdhg_iterations2_new.tex} &\hspace{-1cm}
    \input{pdhg_time2_new.tex}
  \end{tabular}
  \begin{tikzpicture}
    \begin{customlegend}[legend columns=2,legend style={align=center,draw=none,column sep=2ex},
      legend entries={None ($T_k = I$),
        Diagonal~\cite{PoCh11},
        Nested Forest~\cite{MoYeWu18},
        Chains~\cite{MoYeWu18},
        Inact. NF (Proposed)
      }]
      \definecolor{mycolor1}{rgb}{0.00000,0.44700,0.74100}%
      \definecolor{mycolor2}{rgb}{0.85000,0.32500,0.09800}%
      \definecolor{mycolor3}{rgb}{0.92900,0.69400,0.12500}%
      \definecolor{mycolor4}{rgb}{0.49400,0.18400,0.55600}%
      \definecolor{mycolor5}{rgb}{0.46600,0.67400,0.18800}%
      \addlegendimage{mark=none,solid,line legend,line width=3pt,color=mycolor1}
      \addlegendimage{mark=none,solid,line width=3pt,color=mycolor2}   
      \addlegendimage{mark=none,solid,line width=3pt,color=mycolor4}   
      \addlegendimage{mark=none,solid,line width=3pt,color=mycolor5}   
      \addlegendimage{mark=none,solid,line width=3pt,color=mycolor3}   
    \end{customlegend}
  \end{tikzpicture}

  \caption{$\log_{10}$-optimality gap over iterations (left) and time (right) for PDHG with various preconditioners applied to a TV deconvolution problem.\vspace{-0.0cm}}
  \label{fig:pdhg}
\end{figure}

\subsection{Linear inverse problems}
In this image processing experiment we consider a TV deconvolution problem on a regular 2D grid of size $116 \times 87$.
The data term is given by $G(u) = \frac{1}{2} \norm{Au - f}^2$, where the forward model $A$ is a convolution 
with motion blur kernel with radius $3$. We construct $f$ by applying the forward model and adding Gaussian noise. 
The overall problem is solved using PDHG. 
The primal update is a quadratic problem and we use a few iterations of (warm started) conjugate gradient. 
Considering the size of the problem, we set the reconditioning frequency to $n=5$ for the proposed approach.

In Fig.~\ref{fig:pdhg} we show the $\log_{10}$-optimality gap over iterations and time for various choices of preconditioners. The diagonal 
preconditioner is the one from~\cite{PoCh11} with $\alpha = 1$. The forest preconditioners perform comparably when the accuracy is lower. Once the local convergence regime is entered, the proposed algorithm achieves linear convergence rate. Especially for high accuracies, the proposed 
inactively nested forest reconditioning strategy outperforms the other approaches with
respect to overall running time and iterations.

\section{Discussion and conclusion}
We presented an efficient reconditioning strategy for proximal
algorithms on graphs. By relying on a sharp analysis of the local linear convergence
rate we proposed an edge partitioning of the graph into forests which provably boosts
the linear convergence rate. The scaled dual updates are still efficiently computable thanks to a
message-passing algorithm on trees.

While one is tempted to commit to a super-linearly convergent solver once the
optimal active set is identified (as e.g., mentioned in~\cite{Liang14,Liang2015,liang2018local,nutini17a}),
it is unfortunately difficult to verify in practice whether the current active set is the optimal. Furthermore, as
observed in the numerical experiments, the adaptive preconditioning strategy practically improves the convergence
also \emph{before} the local linear convergence regime is entered. The result suggests that
local convergence analysis can serve as a practical guideline for constructing preconditioners for
proximal algorithms.

\bibliographystyle{abbrvnat}
\bibliography{biblio}

\twocolumn[

\aistatstitle{Optimization of Graph Total Variation
	via Active-Set-based Combinatorial Reconditioning \\ 
	--- Supplementary Material ---}

\aistatsauthor{ Zhenzhang Ye \And Thomas M\"ollenhoff \And  Tao Wu \And Daniel Cremers }

\aistatsaddress{ TU Munich \\ \href{mailto:zhenzhang.ye@tum.de}{zhenzhang.ye@tum.de} \And TU Munich \\ \href{mailto:thomas.moellenhoff@tum.de}{thomas.moellenhoff@tum.de} \And TU Munich \\ \href{mailto:tao.wu@tum.de}{tao.wu@tum.de} \And TU Munich \\ \href{mailto:cremers@tum.de}{cremers@tum.de}} 

\begin{lemma}
  Let $h$ be $C^2$ with $l_h I \preceq \nabla^2 h(\cdot) \preceq L_h I$ for some constants $l_h,\,L_h>0$. Then
 the gradient descent on $\min_x~h(Ax+b)$ with step size $1/t = 2 / (L_h \sigma_{\text{max}}(A)^2 + l_h \sigma_{\text{min}>0}(A)^2)$ satisfies
  \begin{equation}
    \norm{x^{k+1} - x^*} \leq \frac{\varphi - 1}{\varphi + 1} \norm{x^k - x^*},
  \end{equation}
  with $\varphi =  \kappa(A)^2 \cdot \kappa(h)$, $\kappa(h) := L_h / l_h$.
  \label{lm:cg}
\end{lemma}


\label{sec:appendix}

\begin{proof}[Proof of Lemma~\ref{lm:cg}]
Clearly $t > (L_h \cdot \lambda_{\text{max}}(A^\top A)) / 2$, so classical theory (e.g.~\cite{BaCo11}) guarantees $x^k \to x^*$.
First note that
\iali{
  x^{k+1} - x^k = -\frac1t A^\top \nabla h(Ax^k + b) \in \ran A^\top,
}
and consequently $x^{k} - x^0 \in (\ker A)^\perp$, also $x^* - x^k \in (\ker A)^\perp$.
Inserting the gradient step for $x^{k+1}$ yields
\begin{equation}
  \begin{aligned}
    &\norm{x^{k+1} - x^*}  
    = \norm{x^k - x^* - \frac1t A^\top (\nabla h(Ax^k + b) - \nabla h(Ax^* + b))}.
  \end{aligned}
\end{equation}

From the mean value theorem it follows
\begin{equation}
  \begin{aligned}
    &\nabla h(Ax^k + b) - \nabla h(Ax^* + b) = M (Ax^k - Ax^*),
  \end{aligned}
\end{equation}
with $M = \int_0^1 \nabla^2 h(Ax^* + b + \alpha (Ax^* - Ax^k)) \mathrm{d}\alpha$.
Since $l_h I \preceq \nabla^2 h(\cdot) \preceq L_h I$ we have $l_h I \preceq M \preceq L_h I$.

This yields due to $x^k - x^* \in (\ker A)^\perp$ that
\begin{equation}
  \begin{aligned}
    &\norm{x^{k+1} - x^*} = \norm{(I - \frac1t A^\top M A)(x^k - x^*)}
    \leq \max \{ | 1 - l_h \sigma_{\min>0}(A)^2 / t |, | 1 - L_h \sigma_{\max}(A)^2 / t | \} \cdot \norm{x^k - x^*}.
  \end{aligned}
\end{equation}
The choice $t = (l_h \sigma_{\min>0}(A)^2 + L_h \sigma_{\max}(A)^2) / 2$ minimizes the above rate and yields the desired result.
\end{proof}

]

\end{document}